\newtheorem{theorem}{Theorem}
\newtheorem{prop}{Proposition}
\newtheorem{lemma}{Lemma}
\newcommand{\E}{{\mathbb E}}
\newcommand{\RR}{{\mathbb R}}
\newcommand{\Rd}{\mathbb{R}^d}
\newcommand {\PP}{{\mathbb P}}
\newcommand{\sss}{\scriptscriptstyle}
\newcommand{\Zd}{\mathbb{Z}^d}
\newcommand{\0}{{\bf 0}}
\newcommand{\1}{{\bf 1}}
\newcommand{\cA}{\mathcal{A}}
\newcommand{\cR}{\mathcal{R}}
\newcommand{\vep}{\varepsilon}
\newcommand{\n}{{\bf n}}
\begin{document}

\title{Competition on $\mathbb{Z}^d$ driven by branching random walk}\parskip=5pt plus1pt minus1pt \parindent=0pt
\author{Maria Deijfen\thanks{Department of Mathematics, Stockholm University; {\tt mia@math.su.se}} \and Timo Vilkas \thanks{Mathematical Sciences, Gothenburg University; {\tt timo.vilkas@chalmers.se}} }
\date{March 2022}
\maketitle

\begin{abstract}
\noindent A competition process on $\mathbb{Z}^d$ is considered, where two species compete to color the sites. The entities are driven by branching random walks. Specifically red (blue) particles reproduce in discrete time and place offspring according to a given reproduction law, which may be different for the two types. When a red (blue) particle is placed at a site that has not been occupied by any particle before, the site is colored red (blue) and keeps this color forever. The types interact in that, when a particle is placed at a site of opposite color, the particle adopts the color of the site with probability $p\in[0,1]$. Can a given type color infinitely many sites? Can both types color infinitely many sites simultaneously? Partial answers are given to these questions and many open problems are formulated.

\vspace{0.3cm}

\noindent \emph{Keywords:} Branching random walk, asymptotic shape, competing growth, coexistence.

\vspace{0.2cm}

\noindent AMS 2010 Subject Classification: 60K35.
\end{abstract}

\section{Introduction}\label{sec:intro}

We consider a competition model on $\Zd$ driven by branching random walk (BRW). Let $\n=(n,0,\ldots,0)\in \Zd$. At time 0, a red particle is placed at the origin and a blue particle is placed at the neighboring site $\1$. The origin is also colored red and $\1$ is colored blue, while all other sites are initially uncolored. The red (blue) particles then reproduce according to BRW in discrete time with offspring law $\cR_r$ and $\cR_b$, respectively, where $\cR_r$ and $\cR_b$ are described in more detail below. When a particle is placed at an uncolored site, the site is assigned the same color as the particle and then keeps this color forever. If two particles of different color are placed at an uncolored site in the same time step, an arbitrary local tie-breaker is applied to decide the color of the site.

According to the above description, the two types evolve independently of each other. We generalize the model by introducing an interaction parameter $p$. Specifically, if a particle is placed at a site of opposite color, the particle switches color with probability $p\in[0,1]$. The case $p=0$ hence corresponds to a situation where the BRW:s evolve independently of each other and compete to reach the sites first. The case $p=1$ on the other hand corresponds to a situation where the particles switch color when placed at a site of opposite color, thereby giving an extra advantage to a type that has been successful in invading many sites. This phenomenon is qualitatively related to the latin phrase 'cuius regio, eius religio' referring to the principle that the ruler of the land dictated the religion of the inhabitants, in force in many European countries in the 16th and 17th century. We will sometimes use this phrase as a metaphor for the case $p=1$.

Before proceeding, we describe the reproduction of the types. Let $\mu_r$ and $\mu_b$ be probability distributions with support on non-negative integers, $K_r$ and $K_b$ arbitrary finite subsets of $\Zd$, and $\nu_r$ and $\nu_b$ probability measures on $K_r$ and $K_b$. We may assume that each site in $K_r$ ($K_b$) is assigned positive mass by $\nu_r$ ($\nu_b$), since otherwise the site(s) may be removed from the set. A particle that is born at time $t$ produces offspring at time $t+1$. The number of children of a red (blue) particle is determined by an independent draw from $\mu_r$ ($\mu_b$), and the children of a red (blue) particle at $x$ are placed at sites determined by repeated independent draws from $\nu_r$ ($\nu_b$) translated by $x$. Write $\cR_r=(\mu_r,\nu_r)$ and $\cR_b=(\mu_b,\nu_b)$ for the measures specifying the reproduction of the red and blue type, respectively. We will throughout make the following assumptions on $\cR_r$ and $\cR_b$:


\begin{itemize}
\item[$\bullet$] The measures $\mu_r$ and $\mu_b$ put mass only on strictly positive integers and have means strictly larger than 1. This means that each particle gives rise to at least one child, with a positive probability of two or more children, and implies that none of the processes dies out.
\item[$\bullet$] The sets $K_r$ and $K_b$ contain all neighbors of the origin. This is a simple way to ensure that any site can be reached by a particle after a finite number of steps. In order to make our questions non-trivial for $p=1$ in dimension $d=1$, we will in that case also assume that $K_r$ ($K_b$) contains at least one site in each direction that is not a neighbor of the origin.
\end{itemize}

Let $\PP_p(\cdot)$ denote the probability law of the process with parameter value $p$. Also, denote by $G_r$ and $G_b$ the events that infinitely many sites are colored red and blue, respectively, and define $C=G_r\cap G_b$. We are interested in the following questions:

\begin{itemize}
 \item[(i)] When are $\PP_p(G_r)$ and $\PP_p(G_b)$ strictly positive? When is one or both of them equal to 1? The answer may depend on the parameter $p$ and on the underlying reproduction laws $\cR_r$ and $\cR_b$.
 \item[(ii)] When do we have that $\PP_p(C)>0$? Obviously this requires that both $\PP_p(G_r)$ and $\PP_p(G_b)$ are strictly positive.
\end{itemize}

We will primarily give partial answers to (i), but along the way we also make some observations in the direction of (ii). First we consider the extremal cases $p=1$ and $p=0$. When $p=1$, a given type can strangle the other by surrounding it with an impenetrable layer of sites of its own color, implying that the surrounded type cannot color any more sites.

\begin{prop}[Cuius regio, eius religio]\label{prop:p1}
For all choices of $\cR_r$ and $\cR_b$, we have that $\PP_1(G_r\cap G_b^c)>0$ and $\PP_1(G_r^c\cap G_b)>0$.
\end{prop}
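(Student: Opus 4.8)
The plan is to show that with positive probability, the red type completely encircles the blue particle at $\1$ before the blue type can escape, and symmetrically for the roles reversed. Since $p=1$, once a closed contour of red sites surrounds $\1$, every blue particle placed on such a site immediately turns red, so the blue cluster is trapped inside a finite region and colors only finitely many sites, while the red type — which never dies out and can reach every site — colors infinitely many. This yields $\PP_1(G_r \cap G_b^c) > 0$; interchanging the colors gives $\PP_1(G_r^c \cap G_b) > 0$.

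First I would fix a finite ``blocking set'' $S \subset \Zd$: a finite collection of sites forming a closed surface (in the graph-theoretic sense, a vertex cut) that separates $\1$ from infinity, chosen so that the origin $\0$ lies strictly outside the region enclosed by $S$ and $\1$ lies strictly inside. Since $K_r$ contains all neighbors of the origin (and in $d=1$ also a site in each direction beyond the neighbors, which is exactly what makes such an enclosing set exist for $d=1$ as well), there is a finite time horizon $T$ and a finite sequence of reproduction events — starting from the single red particle at $\0$ — that places a red particle at every site of $S$ by time $T$, \emph{before any blue particle has left the finite ball containing $S$}. Concretely, one describes an explicit finite ``good'' history: the red particle at $\0$ follows a prescribed branching/displacement pattern (each step requires only that $\mu_r$ assigns positive mass to the prescribed litter size and $\nu_r$ to the prescribed displacements, which holds by our standing assumptions), reaching all of $S$ within $T$ steps, while simultaneously the blue particle at $\1$ and all its descendants happen to make only displacements that keep them inside $S$'s interior for the first $T$ steps. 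Each of these finitely many constraints has positive probability and they involve disjoint collections of independent reproduction draws (red draws versus blue draws), so the joint event — call it $A$ — has positive probability.

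On the event $A$, at time $T$ every site of $S$ has been colored: it is colored red if it was still uncolored when the prescribed red particle arrived, and if a blue particle had reached it earlier then, because $p=1$, that blue particle turned red, so the site is red in either case — hence all of $S$ is red. (One should phrase the construction so that the prescribed red trajectory reaches each site of $S$ no later than any blue particle could, or simply absorb this into the definition of $A$ by also requiring that no blue particle reaches $S$ before time $T$; this is again a positive-probability constraint on the blue draws.) From time $T$ on, the blue cluster is confined to the finite interior of $S$ together with $S$ itself, where it can color only finitely many sites; meanwhile the red BRW survives forever (means of $\mu_r$ exceed $1$ and there is no extinction) and, since $K_r$ generates all of $\Zd$, colors infinitely many sites with probability $1$ on survival. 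Therefore $A \subseteq G_r \cap G_b^c$ up to a null set, giving $\PP_1(G_r \cap G_b^c) \geq \PP_1(A) > 0$. The symmetric argument, building a blue blocking contour around $\0$ using $\cR_b$ and the fact that $K_b$ contains all neighbors of the origin, gives $\PP_1(G_r^c \cap G_b) > 0$.

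The main obstacle is purely bookkeeping: making the finite ``good history'' $A$ precise — specifying a concrete branching/displacement schedule for the red particle that covers a separating set $S$, and the complementary confinement requirement for the blue particles — and verifying that all the imposed constraints are compatible and each has positive probability under the standing assumptions on $\cR_r,\cR_b$. The dimension $d=1$ case needs the extra hypothesis on $K_r,K_b$ (a site beyond the nearest neighbors in each direction) precisely so that a red particle can ``jump over'' the blue particle and seal it off on both sides; without it, in $d=1$ with nearest-neighbor steps a type could never get around its opponent. No delicate estimates are required beyond confirming that the red BRW, conditioned to have performed the prescribed finite history, still colors infinitely many sites almost surely — which follows from irreducibility of the step distribution (guaranteed by $K_r \supseteq$ neighbors of $\0$) and non-extinction.
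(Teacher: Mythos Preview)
Your approach is broadly correct and in the same spirit as the paper's---trap one type inside a finite region of the opposite color---but the execution differs and yours carries some avoidable technical burdens.

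The paper's proof is slicker: to show $\PP_1(G_r^c\cap G_b)>0$, it does not have blue actively build a barrier at all. Instead, it lets \emph{red commit suicide} in step~1 by sending all of its offspring to the blue site $\1$ (where they are recolored blue since $p=1$), while blue avoids the origin. After that step there are no red particles anywhere; only the origin carries the red color. Blue then merely has to color the finite set $K_r$ (the one-step range of a red particle sitting at~$\0$) while continuing to avoid $\0$. Once that is done, any future red particle---which can only arise at $\0$ via recoloring---places its children inside $K_r$, where they are immediately recolored blue. No barrier geometry, no timing race between the two types.

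Your version can be made rigorous, but two points need repair. First, the separating set $S$ must be a vertex cut with respect to the jump graph induced by $K_b$, not the nearest-neighbor graph; otherwise a blue particle in the interior can leap directly over~$S$ to an uncolored exterior site. In effect $S$ has to be an annulus of thickness at least $\max\{|k|:k\in K_b\}$. Second, since $\0$ and $\1$ are adjacent and $K_b$ contains all nearest neighbors, any such cut around $\1$ necessarily contains $\0$ (or has $\0$ in its interior); your stipulation that $\0$ lie strictly outside $S$ is impossible. This is harmless in the end---$\0$ is already red, so it can simply be part of the barrier---but it means the red trajectory building $S$ starts from on or inside the barrier, and you must route it so as to avoid whichever interior sites blue is visiting during the first $T$ steps. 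Your parenthetical self-correction (requiring that no blue particle reach a site of $S$ before red does) is indeed necessary; the sentence preceding it, claiming the site is red ``in either case'', misreads the recoloring rule: if blue arrives first at an uncolored site, the site is blue forever and a later red arrival gets recolored, not the other way around.
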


When $p=0$, the asymptotic growth of the corresponding single type BRW:s will be important for the outcome. It is well known that the set of sites where particles have been placed in a BRW grows linearly in time and converges to a deterministic asymptotic shape when scaled by time. Specifically, let $D(n)$ denote the set of sites where particles have been placed up to time $n$ in a BRW started with a single particle at the origin at time 0 and with reproduction $\cR=(\mu,\nu)$, satisfying the above assumptions. Let $\bar{D}(n)=\{x+(1/2,1/2]^d:x\in D (n)\}$ denote its embedding in $\Rd$. Then there exists a convex compact set $\cA$ with non-empty interior containing the origin such that almost surely, for any $\vep\in(0,1)$, we have that
\begin{equation}\label{eq:shape}
(1-\vep)\cA\subset \frac{\bar{D}(n)}{n}\subset(1+\vep)\cA
\end{equation}
for large $n$; see \cite[Theorem 1.10]{ComPop}. For $x\in\Rd$, we write $\tau^x$ for the distance from the origin to the boundary of $\cA$ in direction $x$, that is, $\tau^x$ is the asymptotic speed of the growth of a single type process in direction $x$. We mention that the inverse of $\tau^x$ is known as the time constant in direction $x$. Its existence follows from the subadditive ergodic theorem and is an integral part of the proof of the existence of an asymptotic shape.

Now consider a two-type process with $p=0$. The types then evolve according to their single type dynamics and it should come as no surprise that a type that is asymptotically faster in a given direction will win in that direction. As a consequence, if there are different directions where the power relationship between the types is reversed, then they will both be able to color infinitely many sites by dominating in different directions. If, on the other hand, one of the types is faster in all directions, it will defeat the other in that it will color all but a finite number of sites.

To state this formally, for $i\in\{r,b\}$, write $\cA_i$ for the asymptotic shape generated by a single type BRW with reproduction $\cR_i$, and $\tau_i^x$ for the associated speed in direction $x$. We say that blue is stronger than red, and red is weaker than blue, if $\tau_r^x<\tau_b^x$ for all $x$. Let $L^x$ be the half-line through $x$ starting at the origin and define $L_t^x= \{y\in L^x:|y|\geq t\}$. Finally, write $S_i$ for the set of sites that is ultimately colored by type $i\in\{r,b\}$ in the two-type process and let $\bar{S}_i=\{y+(1/2,1/2]^d:y\in S_i\}$.

\begin{prop}[Independent BRW:s]\label{prop:p0} Consider a process with $p=0$.
\begin{itemize}
\item[{\rm{(a)}}] If there exist $x$ such that $\tau_r^x<\tau_b^x$, then $\PP_0(G_b)=1$. Specifically, almost surely $\bar{S}_b\supset L_t^x$ for large $t$.
\item[{\rm{(b)}}] If there exist $x$ and $y$ with $\tau_r^x<\tau_b^x$ and $\tau_r^y>\tau_b^y$, then $\PP_0(C)=1$.
\item[{\rm{(c)}}] If blue is stronger than red, then $\PP_0(G_r^c\cap G_b)=1$.
\end{itemize}
\end{prop}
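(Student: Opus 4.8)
The plan is to prove Proposition~\ref{prop:p0} by leveraging the asymptotic shape theorem \eqref{eq:shape} applied separately to each single-type BRW, since with $p=0$ the two processes are genuinely independent and each evolves exactly as a stand-alone BRW. Part (c) follows from part (a) together with an argument that red colors only finitely many sites when blue is uniformly faster, so I will organize the proof around (a), then (b), then (c).

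\textbf{Part (a).} Fix a direction $x$ with $\tau_r^x < \tau_b^x$, and pick a rational speed $s$ strictly between them, say $\tau_r^x < s < \tau_b^x$. First I would apply \eqref{eq:shape} to the blue process: almost surely, for all large $n$, the blue occupied set $\bar D_b(n)/n$ contains $(1-\vep)\cA_b$, and by choosing $\vep$ small enough that $(1-\vep)\tau_b^x > s$, this means every site on the segment of $L^x$ out to distance $sn$ has had a blue particle placed on it by time $n$. Symmetrically, applying \eqref{eq:shape} to the red process with $\vep$ small enough that $(1+\vep)\tau_r^x < s$, almost surely for all large $n$ no red particle has been placed within distance $sn$ of the origin along directions outside $(1+\vep)\cA_r$ — more precisely, red particles stay inside $(1+\vep)n\cA_r$. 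The key geometric point is that the half-line $L^x$ eventually exits $(1+\vep)n\cA_r$ but stays inside $(1-\vep)n\cA_b$ for the relevant range: for a point $y \in L^x$ with $|y| = t$, blue reaches it by time roughly $t/((1-\vep)\tau_b^x) < t/s$, while red cannot reach it before time roughly $t/((1+\vep)\tau_r^x) > t/s$. Hence every site on $L^x$ far enough out is reached by blue strictly before red, so it is colored blue; since $p=0$ there is no later re-coloring. Care must be taken to phrase this so that "for large $t$" holds simultaneously for all sites on $L_t^x$ — this is handled by the fact that the shape inclusions \eqref{eq:shape} hold for all large $n$ at once, so one fixes the threshold $n_0$ from the blue inclusion and the threshold from the red inclusion, takes the max, and then all sites on $L^x$ beyond a corresponding distance are colored blue. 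This gives $\bar S_b \supset L_t^x$ for large $t$, hence in particular $G_b$ occurs, and since the shape theorem holds almost surely, $\PP_0(G_b)=1$.

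\textbf{Parts (b) and (c).} Part (b) is immediate from part (a) applied twice: the hypothesis gives a direction $x$ where blue is faster, so almost surely $\bar S_b \supset L_t^x$ for large $t$ and $G_b$ holds; and a direction $y$ where red is faster, so by the same argument with colors swapped, almost surely $\bar S_r \supset L_s^y$ for large $s$ and $G_r$ holds. The intersection of two almost-sure events is almost sure, so $\PP_0(C) = \PP_0(G_r \cap G_b) = 1$. For part (c), $\PP_0(G_b)=1$ follows from (a) since "blue stronger than red" implies there exists (in fact, for every) $x$ with $\tau_r^x<\tau_b^x$. It remains to show $\PP_0(G_r^c)=1$, i.e. red colors only finitely many sites. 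Here I would argue: by compactness of the unit sphere and continuity of $x\mapsto \tau_i^x$ (equivalently, the fact that $\cA_r$ is contained in the interior of $\cA_b$, which follows from $\tau_r^x<\tau_b^x$ for all $x$ together with the convexity and compactness of both shapes), there is a single $\vep>0$ with $(1+\vep)\cA_r \subset (1-\vep)\cA_b$. Then \eqref{eq:shape} for red gives $\bar D_r(n) \subset (1+\vep)n\cA_r$ for large $n$, while \eqref{eq:shape} for blue gives $(1-\vep)n\cA_b \subset \bar D_b(n)$ for large $n$; combining, for all large $n$ every site where a red particle is placed by time $n$ has already been occupied by a blue particle at some earlier time (since it lies in $(1-\vep)m\cA_b$ for all $m$ slightly less than $n$, and blue's speed is strictly larger). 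One needs to check the timing carefully: a red particle reaching $y$ with $|y|=t$ does so no earlier than about $t/((1+\vep)\tau_r^y)$, whereas blue occupied $y$ by about $t/((1-\vep)\tau_b^y)$, and the uniform gap $(1+\vep)\tau_r^y < (1-\vep)\tau_b^y$ makes the latter strictly smaller for all large $t$. Hence red colors no site outside a bounded region, so $S_r$ is finite, $G_r^c$ holds almost surely, and $\PP_0(G_r^c \cap G_b)=1$.

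\textbf{Main obstacle.} The conceptually routine but technically delicate step is converting the scaled set-inclusions \eqref{eq:shape}, which are statements about $\bar D(n)/n$ at individual times $n$, into a statement about which type reaches a given far-away site \emph{first} — i.e. transferring from "occupied set at time $n$" information to "hitting time of site $y$" information, uniformly over all $y$ on the half-line. The clean way is to observe that $y \in \bar D_i(n)$ for the smallest such $n$ defines the hitting time of $y$ by type $i$, and then the inclusions $(1-\vep)n\cA_i \subset \bar D_i(n)/n \subset (1+\vep)n\cA_i$ (holding for all $n \ge n_0$ simultaneously) sandwich this hitting time between $|y|/((1+\vep)\tau_i^{y}) $ and $|y|/((1-\vep)\tau_i^{y})$ for $|y|$ large; choosing $\vep$ small enough that the blue upper bound beats the red lower bound in the relevant directions then does it. A secondary point worth stating explicitly is the monotone/continuity fact that $\tau_r^x < \tau_b^x$ for \emph{all} $x$ upgrades to a uniform separation $\inf_x (\tau_b^x - \tau_r^x) > 0$ (or equivalently $(1+\vep)\cA_r \subset \mathrm{int}\,\cA_b$ for some $\vep>0$), which is needed for part (c) and follows from compactness of the sphere and continuity of the time constant.
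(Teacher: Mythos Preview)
Your proposal is correct and follows essentially the same route as the paper's own proof: apply the shape theorem \eqref{eq:shape} separately to each type (which is legitimate since $p=0$ makes the two BRW:s independent), choose $\vep$ so that $(1-\vep)\tau_b^x>(1+\vep)\tau_r^x$ in part~(a), and for part~(c) use compactness to upgrade the pointwise inequality $\tau_r^x<\tau_b^x$ to a uniform containment $(1+\vep)\cA_r\subset(1-\vep)\cA_b$. The paper phrases the comparison at the level of occupied sets at time $n$ while you phrase it via hitting times of individual sites, but these are equivalent reformulations; your ``main obstacle'' paragraph is precisely the translation between the two, and you handle it correctly.
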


When $p=0$, the asymptotic growth of the types hence determines the outcome. In particular, a stronger type will defeat a weaker one by coloring all but a finite number of sites. For $p=1$, on the other hand, both types always have a possibility of outcompeting the other due to randomness in the beginning of the growth. One might ask which effect dominates for $p\in(0,1)$.


For $p<1$, but close to 1, we can show that both types still have the possibility of defeating the other, given that the types place their offspring according to the same spatial law (and hence differ only in the distribution of the number of offspring). We believe that the result is true also without this restriction; see the discussion around Lemma \ref{le:perturbed_lin}.

\begin{theorem}\label{prop:p_large} If $p$ is sufficiently close to 1 and $\nu_r=\nu_b$, then $\PP_p(G_r\cap G_b^c)>0$ and $\PP_p(G_r^c\cap G_b)>0$.
\end{theorem}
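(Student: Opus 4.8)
The plan is to show that, with positive probability, the red type can "strangle" the blue type by surrounding $\1$ with a closed layer of red sites before any blue particle escapes, after which—since $p$ is close to $1$—this layer is robust enough that blue particles almost surely fail to break through. By symmetry the same argument gives $\PP_p(G_r^c\cap G_b)>0$, so I will only describe the red victory.

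First I would run the argument for $p=1$ (Proposition \ref{prop:p1}) and isolate a finite-time event $E$ that witnesses red strangulation: there is a time $T$, a finite region $B$ containing $\1$, and a "moat" $\partial B\subset S_r$ such that every blue particle alive at time $T$ is confined to the interior of $B$, no uncolored site of $\partial B$ remains, and red has already colored sites arbitrarily far out along some half-line in the complement of $B$ (so that $G_r$ also occurs). The assumption $\nu_r=\nu_b$ enters here only to make the later comparison clean; the event $E$ itself depends only on $\cR_r$ and the early blue history, and $\PP_1(E)>0$. The key observation is that $E$ is determined by the configuration up to time $T$, and on $E$ the blue particles at time $T$ sit inside a bounded region surrounded by red sites.

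The second and main step is a perturbation argument showing that, conditionally on $E$ (or on a slightly enlarged finite-time event of the same flavor), blue fails to color infinitely many sites with probability bounded below uniformly for $p$ near $1$. Here I would compare the blue process trapped inside $B$ to a subcritical branching-type process: each blue particle that hits the red moat $\partial B$ is removed with probability $p$, and only with probability $1-p$ does it convert a moat site and potentially leak outward. Because the moat has bounded thickness and blue must traverse it, the expected number of "leaks" is controlled by $(1-p)$ times a quantity depending only on $\cR_b$ and the geometry of $B$; taking $p$ close enough to $1$ makes this expectation less than $1$, and a standard first-moment / domination-by-subcritical-branching argument (in the spirit of Lemma \ref{le:perturbed_lin}, invoked here) shows that with probability bounded away from $0$ no leak ever occurs, so blue stays confined to the finite set $B$ forever and $G_b^c$ holds. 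Combining with the fact that red continues its (essentially unobstructed) growth outside $B$—which follows from the single-type shape theorem \eqref{eq:shape} applied to the red BRW in the half-space direction chosen in $E$, since blue never reaches there—yields $\PP_p(G_r\cap G_b^c)>0$.

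The step I expect to be the main obstacle is making the confinement argument for $p<1$ genuinely rigorous: unlike the clean $p=1$ case, a single blue leak through the moat does not immediately kill the strangulation, so one must set up a self-improving or renewal-type estimate—each time blue threatens the moat, red (which is also reproducing) has a further chance to re-seal it—and show the resulting process of "threats" is dominated by a subcritical branching process with a uniform (in $p$) survival-of-containment probability. Controlling the interaction between the two growing clusters near $\partial B$, rather than treating blue as trapped against a static barrier, is the delicate point; this is presumably where the structural input of Lemma \ref{le:perturbed_lin} does the real work, and where the hypothesis $\nu_r=\nu_b$ is used to couple the two boundary dynamics.
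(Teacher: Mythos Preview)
Your confinement picture has the right flavor but the central mechanism is wrong, and the paper's argument hinges on precisely the point you flag as ``the main obstacle'' and then leave open. A \emph{static} moat cannot work: on your event $E$, red particles keep landing on the blue sites inside $B$ and are converted to blue with probability $p$, so the blue population inside $B$ grows exponentially (at rate governed by $\mu_r,\mu_b$). Each blue particle attempting to cross a moat of fixed thickness $k$ succeeds with probability at least $(1-p)^k$, a constant in $n$. Hence the expected number of leaks at time $n$ blows up for every $p<1$, and no first-moment or subcritical-domination argument can show containment. Your proposed ``red re-seals the moat'' renewal is not a repair: once a blue particle escapes it immediately colors new sites blue, and you would have to control an interacting two-type boundary process, which is exactly what the proof is trying to avoid.

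The paper's key idea is to make the confining region \emph{grow linearly in time}. In the paper's version (roles reversed: blue confines red), the initial event leaves a single red particle at the origin surrounded by a blue ball $B(cN)$. Lemma~\ref{le:perturbed_lin} --- which is a linear-growth statement for the \emph{joint} visited set, not a subcritical-branching lemma --- guarantees that from then on the visited (hence blue, by induction) region contains $B(cn)$ at time $t+n$. A red particle born at the origin at time $t+n$ must therefore traverse distance $cn$ through blue territory to reach an uncolored site; Lemma~\ref{GW} (the actual subcritical-branching lemma) bounds this by $((1-p)\E[\mu_r])^{cn/\rho_r}$, which for $p$ close to $1$ decays geometrically at any prescribed rate. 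The number $M_n$ of red particles at the origin at time $t+n$ does grow exponentially, but only at a fixed rate; it is bounded via a mass-transport argument against a translation-invariant superposition of red and blue branching. Summing $\E[M_n]\cdot((1-p)\E[\mu_r])^{cn/\rho_r}$ then gives a convergent series that can be made less than $1/2$. Finally, the hypothesis $\nu_r=\nu_b$ is used \emph{only} to prove Lemma~\ref{le:perturbed_lin} (coupling the two-type process below a one-type process with offspring law $\min(\mu_r,\mu_b)$); it has nothing to do with coupling boundary dynamics.
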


\begin{figure}
\centering
\includegraphics[scale=0.8]{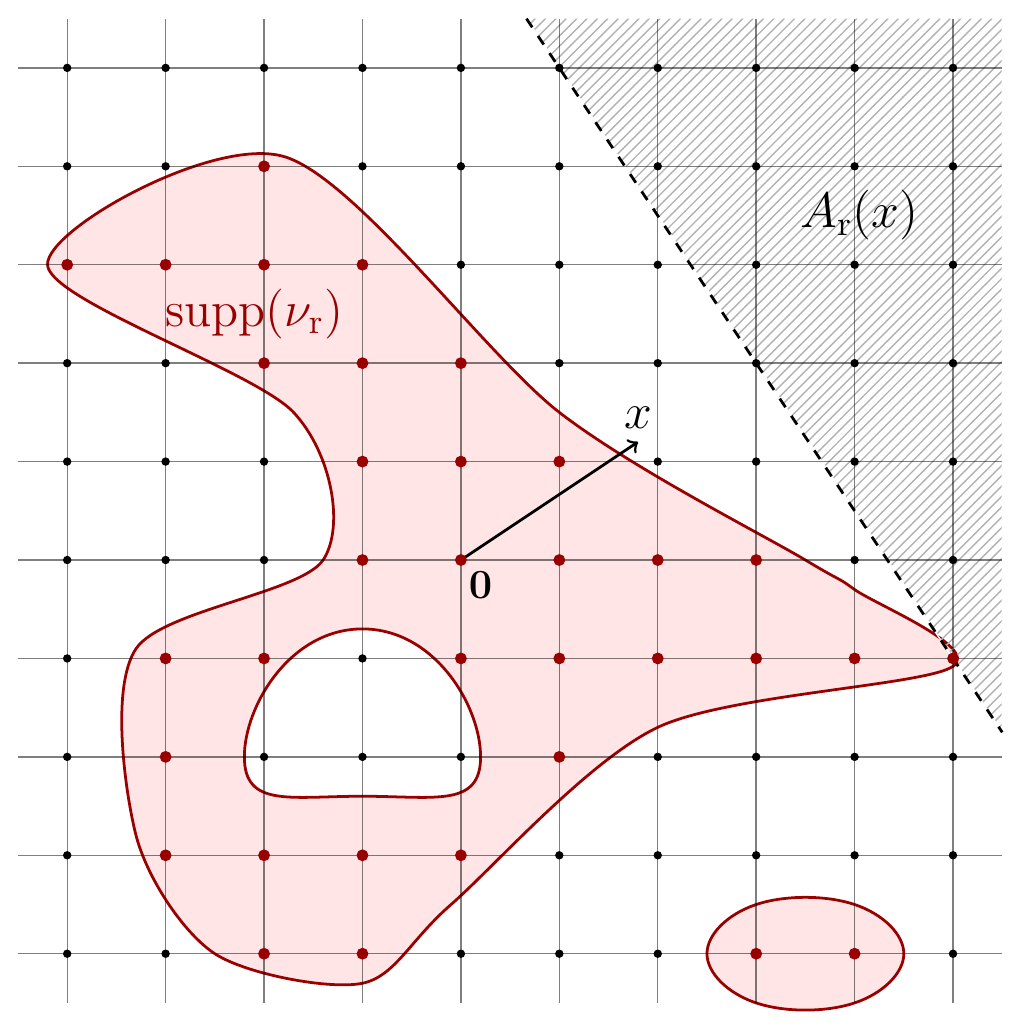}
\caption{Maximum range of the red offspring displacement $\nu_\mathrm{r}$ in a given direction $x\in\Zd$.}\label{range}
\end{figure}

For $p>0$, but close to 0, a type that has a sufficiently big advantage will still almost surely occupy infinitely many sites, but we cannot guarantee that the other type does not do so as well. Furthermore, the concept of having an advantage is stronger than for $p=0$. To specify what we need, let us first define the reach of an offspring displacement in a given direction. For $x\in \RR^d$, let $\rho_r(x):=\max\{\langle x,y\rangle;\;y\in\mathrm{supp}(\nu_r)\}$, where $\langle \cdot,\cdot\rangle$ denotes scalar product. Furthermore, define the open set $A_r(x):=\{y\in\Zd;\;\langle x,y\rangle>\rho_r(x)\}$, see Figure \ref{range} for an illustration. Note that $\rho_r(x)$ depends both on the direction and norm of $x$, while $A_r(x)$ depends only on the direction of $x$. Also note that, for all directions $x\in \RR^d$, we have that $\nu_r(A_r(x))=0$ by definition. Finally, for any set $A\subseteq \Zd$, write $n_r(A)$ for the expected number of offspring placed in $A$ by a red particle at the origin, that is, $n_r(A):=\E[\mu_r]\nu_r(A)$. Define $\rho_b(x)$, $A_b(x)$ and $n_b(A)$ analogously for blue. We now say that blue has a {\em supercritical advantage} over red in direction $x\in\RR^d$, if $n_b(A_r(x))>1$, that is the expected number of blue offspring placed further in direction $x$ than red offspring can reach is larger than 1.

It turns out that, if a type has a supercritical advantage in some direction, it will almost surely color infinitely many sites. The macroscopic assumption of a larger asymptotic speed in Proposition \ref{prop:p0}(a) is hence replaced by a microscopic assumption on the reproduction laws. The idea is that, if a type has a supercritical advantage over the other, a branching process can be defined that is supercritical for $p$ sufficiently small and where survival of the process implies that the favored type reaches untouched land in each time step after finitely many steps.

\begin{theorem} \label{prop:p_small} If there exist $x\in\RR^d$ such that $n_b(A_r(x))>1$, then $\PP_p(G_b)=1$ for $p\in\big[0,\frac{n_b(A_r(x))-1}{n_b(A_r(x))}\big)$. If, in addition, there exist $y\in\RR^d$ such that $n_r(A_b(x))>1$, then $\PP_p(C)=1$ for $p>0$ sufficiently small.
\end{theorem}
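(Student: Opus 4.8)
\textbf{Proof proposal for Theorem \ref{prop:p_small}.}

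The plan is to follow the heuristic indicated before the statement: for the favoured direction $x$, build an auxiliary branching process of ``blue pioneers'' — blue particles that have broken ahead, in direction $x$, of every site ever coloured red — and show that (i) this process stochastically dominates a Galton--Watson process whose mean is $(1-p)\,n_b(A_r(x))$, hence supercritical exactly when $p<\frac{n_b(A_r(x))-1}{n_b(A_r(x))}$; (ii) on survival it forces $G_b$; and (iii) it is launched successfully with probability one, not merely with positive probability. Throughout write $\langle x,\cdot\rangle$ for the relevant linear functional.

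The construction rests on one deterministic fact: if $R_t$ denotes the maximum of $\langle x,\cdot\rangle$ over all sites coloured red by time $t$, then $R_t\le t\,\rho_r(x)$ for every $t$. Indeed, an extremal red site is coloured by a red particle whose lineage last became red either at the origin (height $0=R_0$) or at a colour switch, which by definition occurred at a site that was already red and hence at height at most $R_\sigma$, where $\sigma$ is the switch time; since red displacements change $\langle x,\cdot\rangle$ by at most $\rho_r(x)$, this gives $R_t\le\max_{0\le\sigma<t}\big(R_\sigma+(t-\sigma)\rho_r(x)\big)$, and $R_t\le t\rho_r(x)$ follows by induction. Now set $\delta_0:=\min\{\langle x,w\rangle-\rho_r(x):w\in\mathrm{supp}(\nu_b)\cap A_r(x)\}$, which is strictly positive because $\mathrm{supp}(\nu_b)\subseteq K_b$ is finite and $A_r(x)$ is open, and call a blue particle at $z$ at time $t$ a \emph{pioneer} if $\langle x,z\rangle>R_t$. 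From each pioneer retain only the offspring displaced into $A_r(x)$: these land at sites of height $>\langle x,z\rangle+\rho_r(x)>R_t+\rho_r(x)\ge R_{t+1}$, hence on sites never coloured red, so they stay blue with probability one and are again pioneers; the expected number of them is $n_b(A_r(x))$. While a lineage in this process is not yet a pioneer its members may be placed on red territory, but each such placement keeps the particle blue with probability at least $1-p$; hence the process dominates a Galton--Watson process with mean $(1-p)\,n_b(A_r(x))>1$, of survival probability $\theta_p>0$. Finally, a member of this process at time $s+j$ (counting from a start at $\1$ at time $s$) has height at least $\langle x,\1\rangle+j(\rho_r(x)+\delta_0)$, which exceeds $R_{s+j}\le(s+j)\rho_r(x)$ once $j$ is large; so on survival every such particle is eventually a pioneer, thereafter lands only on uncoloured or blue sites, and the maximal height of newly blue‑coloured sites tends to infinity, yielding $G_b$.

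To upgrade ``$\theta_p>0$'' to ``probability one'', I would use a restart argument based on the permanently blue site $\1$. Red activity persists near the origin (the single‑type red BRW is supercritical), so red particles are placed on $\1$ at infinitely many times, and each such placement recolours the particle blue independently with probability $p$; for $p>0$ this reseeds a blue particle at $\1$ infinitely often almost surely. From each reseeding we launch the pioneer process, which survives, conditionally on the past, with probability at least $\theta_p$; a Borel--Cantelli/conditional‑independence argument then shows some launch succeeds almost surely, so $\PP_p(G_b)=1$. (When $p=0$ there is no reseeding, but the single‑type blue BRW never dies and has unboundedly many particles, from each of which the pioneer process can be attempted; equivalently one notes that the blue rightmost speed in direction $x$ is an a.s.\ constant, which is at least $\rho_r(x)+\delta_0>\rho_r(x)\ge$ the red rightmost speed.) For the coexistence statement, the two hypotheses are symmetric in red and blue, so the first part applied with the roles interchanged gives $\PP_p(G_r)=1$ whenever $p<\frac{n_r(A_b(y))-1}{n_r(A_b(y))}$; for $p$ below the minimum of the two thresholds both $\PP_p(G_b)=1$ and $\PP_p(G_r)=1$, hence $\PP_p(C)=\PP_p(G_r\cap G_b)=1$.

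I expect the almost‑sure upgrade to be the main obstacle: one must turn the reseeding times into bona fide trials with conditional success probability bounded below by $\theta_p$, while controlling the genuinely positive chance that the blue type is temporarily extinct (which can happen in one step when $\mu_b$ charges $1$), and one must verify that red indeed returns to the site $\1$ infinitely often. By contrast the front estimate $R_t\le t\rho_r(x)$ and the branching‑process domination are comparatively routine once the pioneer notion is set up correctly.
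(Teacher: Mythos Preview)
Your proposal is correct and follows the same route as the paper: thin the blue BRW to offspring displaced into $A_r(x)$, dominate it below by a Galton--Watson process of mean $(1-p)\,n_b(A_r(x))$, use the strictly positive gap $\delta_0$ (the paper's $\epsilon$) to outpace the deterministic red-front bound (your inductive $R_t\le t\rho_r(x)$ is exactly the paper's chain-of-sites argument), and restart at $\1$ via conditional Borel--Cantelli. On the obstacle you flag, the paper's treatment is slightly more direct than yours: rather than verifying that \emph{red} returns to $\1$ infinitely often and splitting off the case $p=0$, it simply observes that the \emph{joint} colour-blind particle process is recurrent, so infinitely many particles land at $\1$, and since $\1$ is permanently blue each such particle either arrives blue or is recoloured blue with probability $p$ --- in either regime this yields infinitely many blue seeds at $\1$ without further case analysis.
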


We also note that having a supercritical advantage is a stronger assumption than having a larger speed in some direction, in the sense that the former implies the latter.

\begin{prop}\label{le:supercritical} If there exist $x$ such that $n_b(A_r(x))>1$, then there exist $x'$ such that $\tau_b^{x'}>\tau_r^{x'}$.
\end{prop}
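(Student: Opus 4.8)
The plan is to show that the asymptotic shape $\cA_b$ protrudes beyond $\cA_r$ in the direction $x$, and then to read off the required $x'$ from a point of $\cA_b$ lying strictly outside $\cA_r$. I work throughout with the single-type BRW:s started from a particle at the origin whose shapes are $\cA_r,\cA_b$; since $A_r(x)$, the shapes and the speeds all depend only on the direction of $x$, I may assume $|x|=1$.

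First I would record a deterministic ballistic bound for red. Since $\langle x,y\rangle\le\rho_r(x)$ for every $y\in\mathrm{supp}(\nu_r)$, any red particle present at a time $k$ lies in $\{z:\langle x,z\rangle\le k\rho_r(x)\}$, so $D_r(n)\subseteq\{z:\langle x,z\rangle\le n\rho_r(x)\}$ for all $n$; embedding in $\RR^d$, dividing by $n$, and letting $n\to\infty$ in \eqref{eq:shape} gives $\cA_r\subseteq H:=\{a\in\RR^d:\langle x,a\rangle\le\rho_r(x)\}$.

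Next I would treat blue. Thin the blue BRW by keeping a particle only when \emph{every} displacement along its line of descent from the root lies in $A_r(x)$. Conditionally on a retained particle having $N\sim\mu_b$ children, the number of retained children is $\mathrm{Binomial}(N,\nu_b(A_r(x)))$, so the thinned process is a Galton--Watson process with mean offspring number $n_b(A_r(x))>1$; being supercritical it survives with positive probability. Moreover $\mathrm{supp}(\nu_b)\cap A_r(x)$ is finite (it lies in $K_b$) and nonempty, so $\delta:=\min\{\langle x,y\rangle:y\in\mathrm{supp}(\nu_b)\cap A_r(x)\}-\rho_r(x)>0$, and on the survival event there is, for every $n$, a retained particle at generation $n$ --- a genuine blue particle whose site $z_n\in D_b(n)$ satisfies $\langle x,z_n\rangle\ge n(\rho_r(x)+\delta)$. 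Since, by taking support functions in \eqref{eq:shape}, $n^{-1}\max\{\langle x,z\rangle:z\in D_b(n)\}\to\sup_{a\in\cA_b}\langle x,a\rangle$ almost surely, and this limit is deterministic while being $\ge\rho_r(x)+\delta$ on a set of positive probability, we conclude $\sup_{a\in\cA_b}\langle x,a\rangle\ge\rho_r(x)+\delta$. By compactness some $a^\ast\in\cA_b$ attains this supremum, so $\langle x,a^\ast\rangle>\rho_r(x)$ and hence $a^\ast\notin H\supseteq\cA_r$.

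To finish, take $x'=a^\ast$, which is nonzero since $\langle x,a^\ast\rangle>\rho_r(x)>0$. Because $a^\ast\in\cA_b$ we have $\tau_b^{x'}\ge|a^\ast|$, while convexity of $\cA_r$ together with $\0\in\cA_r$ shows that $\{t\ge0:t\,a^\ast/|a^\ast|\in\cA_r\}=[0,\tau_r^{x'}]$, an interval not containing $|a^\ast|$ because $a^\ast\notin\cA_r$; hence $\tau_r^{x'}<|a^\ast|\le\tau_b^{x'}$, as desired. The one genuinely delicate point is the blue step: one has to verify that the thinned process is really a branching \emph{sub}process of the blue BRW --- so that its survival does force genuine blue particles far out in direction $x$ --- and that the protrusion of $\cA_b$ past the hyperplane $\{\langle x,\cdot\rangle=\rho_r(x)\}$, which is established only on the survival event, may be promoted to a sure statement precisely because the asymptotic shape is non-random. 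The remaining manipulations of support functions and of the sandwich \eqref{eq:shape} are routine.
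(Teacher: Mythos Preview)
Your proof is correct and follows essentially the same route as the paper: both exploit the thinned blue BRW restricted to displacements in $A_r(x)$, use its supercriticality to push blue beyond the halfspace $\{\langle x,\cdot\rangle\le\rho_r(x)\}$ containing $\cA_r$, and promote the positive-probability conclusion to an almost-sure one via the deterministic nature of the asymptotic shape. Your version is simply more explicit, spelling out the support-function computation, the quantitative gap $\delta$, and the extraction of $x'=a^\ast$ together with the convexity argument giving $\tau_r^{x'}<|a^\ast|\le\tau_b^{x'}$, whereas the paper leaves these details implicit.
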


The rest of the paper is organized so that the rest of this section consists of some suggestions of further work and a short overview of related work. The proofs are then given in two separate sections, one for $p=1$ and $p\lessapprox 1$, and one for $p=0$ and $p\gtrapprox 0$.

\subsection{Open problems}\label{sec:open}

Several aspects of our questions (i) and (ii) are left open. Here we elaborate on some of them and describe possible extensions of the model.

\textbf{Behavior for $p$ close to 0.} Our result for $p\gtrapprox 0$ states that a type that has a supercritical advantage almost surely colors infinitely many sites. Does the other type capture only a finite number, or can it also grow large? Does the statement remain true if the advantage is only in terms of the asymptotic shape, as for $p=0$? A weaker type (in terms of the shape) cannot occupy infinitely many sites when $p=0$, but when $p=1$ it can. It would be interesting to understand if this possibility arises abruptly when $p$ becomes positive.

\textbf{Behavior for $p\in(0,1)$.} How does the process behave for $p$ that is not close to neither 0 nor 1? For $p=0$ and $p\gtrapprox 0$, a type that has some type of advantage will almost surely color infinitely many sites, while for $p=1$ and $p\lessapprox 1$ there is a positive probability that it colors only finitely many sites. When does this change and what explains the change?

\textbf{Coexistence in the extremal cases.} As for the question (ii) concerning coexistence, our only result so far is that coexistence has positive probability when $p$ is small and the types dominate in different directions; see Proposition \ref{prop:p0}(c) and Theorem \ref{prop:p_small}. Is coexistence possible when $\cA_r=\cA_b$ and $p=0$? We believe the answer is yes, at least in the symmetric case when the types have the same reproduction law. One might guess that coexistence then happens with probability 1, but randomness in the beginning of the growth could potentially give one of the types an impregnable lead. This however needs to be further investigated. When $p=1$, we believe that coexistence in a similar fashion is possible in the totally symmetric case and when the types dominate in different directions, while it is impossible when one of the types is stronger than the other.

If our suggestions are correct, the possibility of coexistence behaves similarly in the two extremal cases. We stress however that the geometric properties of the set of sites colored by the respective types is presumably very different. In the symmetric case for instance, coexistence when $p=1$ most likely occurs in that the types dominate in different regions, while for $p=0$ coexistence occurs in that both types color sites close to the boundary of their joint asymptotic shape, resulting in a mix of both colors.

\textbf{More general reproduction.} In our setup, the types place their offspring independently in finite sets. This could be generalized to allow for infinite spatial range, and for dependence in the placement of the children of a given particle. Also the number of children and their placement could be allowed to be dependent. Such more general reproduction laws would make our proofs longer and more technical, but the results should generally still be valid. However, some assumptions controlling the spatial growth of the process will be needed, for instance to ensure that there exists an asymptotic shape.

\subsection{Related work}

Models for competition on $\Zd$ have been studied for approximately two decades. One of the first examples is a two-type version of first passage percolation introduced in \cite{HP1}, where the competition is driven by i.i.d\ passage times on the edges, with potentially different distributions for the two types. The case with exponential passage times is known as the Richardson model and has received particular attention. It is clear that each type has a positive probability of occupying infinitely many sites by strangling the other, thereby preventing it from growing any further. The main question is whether coexistence is possible. For the Richardson model the answer is believed to be yes if and only if the infections have the same intensity; see \cite{pleasures} for an overview and further references. Versions of the two-type Richardson model have been considered for instance in \cite{Neu}, where sites recover after some time, in \cite{urns}, where a site with at least two neighbors of the same type is immediately occupied by that type, and in \cite{VA}, where sites reached by a one-type process may mutate into a different species. Our model with $p=1$ is qualitatively similar to competing first passage percolation in that one of the types may win by surrounding the other.

Another type of competition models is provided by two-type versions of growth models driven by moving particles. Here the type is not associated with the sites, but with particles moving on the sites. The growth typically starts from an i.i.d.\ configuration of inactive particles and when a particle is activated it starts moving according to simple random walk. If particles are inactive until they are hit by active particles the model is known as the frog model, while a version where all particles are active from the start is referred to as the diffusive epidemic model. Two-type version of these models are analyzed in \cite{comp_frogs} and \cite{KS_03,RW_competition}, respectively. Our model shares some features of these models, but differ in that particles are not present initially, but arise as a result of the growth, and in that type is assigned both to particles and sites.

BRW has been a very active topic in contemporary probability the last two decades; see \cite{Shi} for a survey covering mainly the one-dimensional case. BRW in higher dimensions is less well understood, but shape theorems can be found in \cite{Biggins,ComPop}. The model is well suited to describe spatial evolution of biological populations and versions of the model incorporating competition have been analyzed in this context; see e.g.\ \cite{AP, BP, BD, Eth}. The competition in these models however amounts to a single type of particles competing with each other in that there are constraints on the particle density or mass. An example of a two-type competition model is provided by \cite{BEM}, where the number of particles in bounded regions is limited. In our model, there are no limitations on the particle density, but competition arises in that the first type to reach a site is given a perpetual local advantage. It thus combines aspects of all of the above model types.

\section{'Cuius regio, eius religio' and $p$ close to 1}

We begin with the simple proof of Proposition \ref{prop:p1}.

\begin{proof}[Proof of Proposition \ref{prop:p1}]
To show that $\PP_1(G_r^c\cap G_b)>0$, consider a scenario where the red type (starting at the origin) places all its offspring at $\1$ in the first time step, while the blue type (starting at $\1$) does not place any offspring at the origin. Since $\1$ is colored blue from the start and $p=1$, this means that there will be no red particles immediately after the first time step, but the only presence of red is that the origin is colored red. Blue then proceeds to color all sites in $K_r$ without ever placing any offspring at the origin. This entails that no further sites can be colored red, since no uncolored sites are within reach for red particles placed at the origin (arising when blue particles place offspring there). By the assumptions on $\cR_r$ and $\cR_b$ -- in particular the fact that $K_r$ is finite and $\nu_r$ and $\nu_b$ put positive mass on all nearest neighbors of the origin -- this scenario has positive probability, which proves the claim. In $d=1$, we here also need the assumption that offspring is not placed only at nearest neighbors. That $\PP_1(G_r\cap G_b^c)>0$ is proved analogously with the roles of red and blue interchanged.
\end{proof}

To deal with the case when $p\lessapprox 1$, we will need two auxiliary results. The first one states that a one-type process where particles are removed with probability $p$ is unlikely to reach sites far from the origin if $p$ is large.

\begin{lemma}\label{GW} Consider a one-type process, starting with one (red) particle at the origin and where each new particle is removed during branching independently with probability $p$. The probability of any offspring reaching a site at distance $cn$ from the origin is bounded from above by $\big((1-p)\E[\mu_r]\big)^{\frac{cn}{\rho_r}}$, where $\rho_r=\max\{\rho_r(x);\;|x|=1\}$ and $c,n>0$.
\end{lemma}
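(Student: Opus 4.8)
The plan is to bound the relevant event by a first-moment (union bound) computation over the offspring genealogy. First I would fix a site $z$ at distance $\ge cn$ from the origin and consider any particle placed at $z$: it sits at the end of some ancestral line $x_0 = \mathbf{0}, x_1, \dots, x_k = z$ in the branching random walk, where each step $x_{j} - x_{j-1}$ lies in $\mathrm{supp}(\nu_r)$, so that $|x_j - x_{j-1}| \le \rho_r$ (taking $|\cdot|$ to be the Euclidean norm, consistent with the definition $\rho_r = \max\{\rho_r(x): |x|=1\}$, since $\rho_r(x) = \max_{y \in \mathrm{supp}(\nu_r)}\langle x, y\rangle \ge |y|$ when $x = y/|y|$). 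By the triangle inequality, reaching distance $cn$ therefore requires the ancestral line to have length $k \ge cn/\rho_r$.

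Next I would run the first-moment bound. For a fixed genealogical "address" of generation $k$ (a particle together with the sequence of sibling-indices specifying the ancestral line), the probability that all $k$ particles along that line survive the independent thinning is $(1-p)^k$, and by Wald / the branching property the expected number of such length-$k$ addresses is $\E[\mu_r]^k$. Hence the expected number of surviving particles in generation $k$ is $\big((1-p)\E[\mu_r]\big)^k$, and so the expected number of surviving particles anywhere at distance $\ge cn$ is at most $\sum_{k \ge cn/\rho_r} \big((1-p)\E[\mu_r]\big)^k$. By Markov's inequality the probability that any offspring reaches such a site is bounded by this sum. If $(1-p)\E[\mu_r] < 1$ the geometric series is summable and dominated (up to the constant $\big(1-(1-p)\E[\mu_r]\big)^{-1}$) by its first term $\big((1-p)\E[\mu_r]\big)^{cn/\rho_r}$; if $(1-p)\E[\mu_r] \ge 1$ the claimed bound is $\ge 1$ and holds trivially. (One may absorb the constant, or simply note that the statement as phrased is the leading-order bound; I would state it with the geometric-sum constant or remark that the bound is meaningful precisely in the regime $(1-p)\E[\mu_r]<1$.)

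The only mildly delicate point is the bookkeeping of "addresses": one must be careful that the thinning is applied per particle (independently at birth) rather than per displacement, and that counting ancestral lines rather than distinct particles is legitimate — but since each particle has a unique ancestral line this is exactly the standard many-to-one / first-moment identity for branching random walk, so no subtlety beyond setting up notation arises. The main obstacle, such as it is, is simply verifying the geometric inequality $|x_j - x_{j-1}| \le \rho_r$ for every displacement from $\mathrm{supp}(\nu_r)$; this follows because for any $y \in \mathrm{supp}(\nu_r)$, taking $x = y/|y|$ gives $|y| = \langle x, y\rangle \le \rho_r(x) \le \rho_r$, so that a path of fewer than $cn/\rho_r$ steps cannot escape the ball of radius $cn$, completing the argument.
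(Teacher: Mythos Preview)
Your proof is correct and follows essentially the same first-moment argument as the paper. The paper avoids your geometric sum over generations (and the attendant constant) by observing directly that reaching distance $cn$ forces the thinned process to survive to the single generation $k=\lceil cn/\rho_r\rceil$, whence $\PP(\text{reach }cn)\le\PP(X_k>0)\le\E[X_k]=\big((1-p)\E[\mu_r]\big)^k$.
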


\begin{proof} In order to reach a site at distance $cn$ from the origin, the process has to survive for at least $\lceil\frac{cn}{\rho_r}\rceil$ generations, since the displacement in each branching is bounded by $\rho_r=\max\{\rho_r(x);\;|x|=1\}$. If $X_k$ denotes the number of particles in generation $k$, its expectation is $\E[X_k]= \big((1-p)\E[\mu_r]\big)^k$. A simple first moment method argument then gives that $\PP(X_k>0)\leq \E[X_k]$, which proves the claim.
\end{proof}

In the second auxiliary result, we want to establish that (a version of) the two-type process grows linearly in time in the sense that the (continuum version of the) set of visited sites contains a linearly growing ball. Note that this is not an immediate consequence of the more general version of the shape theorem in \cite{ComPop}, which allows for random environment in the branching, since our environment is not i.i.d.\ but arises from the evolution of the process. Furthermore, the branching is based on particle type rather than the color of the site. Although the statement might appear obvious, we have found it difficult to establish rigorously, and have settled for the special case when the types have the same spatial reproduction law, where it follows from the one-type shape theorem. Here, $B(s)$ denote a ball in $\RR^d$ with radius $s$ centered at the origin.


\begin{lemma}\label{le:perturbed_lin}
Consider a two-type process which starts with one (red) particle at the origin and some sites pre-colored in an arbitrary way. If $\nu_r=\nu_b$, then the set of visited sites grows linearly. Specifically, there exist $c>0$ such that for any $\vep>0$ we have that $\PP(\bar{D}(n)\supset B(cn))\geq 1-\vep$ for $n\geq N$, where $c$ and $N=N(\vep)$ may be chosen independently of the pre-coloring.
\end{lemma}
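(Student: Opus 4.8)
The plan is to reduce the two-type process (with an arbitrary pre-coloring) to a single-type BRW, for which the shape theorem \eqref{eq:shape} already gives linear growth. The key observation is that since $\nu_r=\nu_b=:\nu$, the \emph{spatial} displacement law of every particle is the same regardless of its type or of how sites are colored; only the number of offspring depends on the type of the parent. Hence I would couple the two-type process with a single-type auxiliary BRW $\widetilde D(n)$ in which every particle reproduces with the \emph{less fertile} law $\mu_{\min}$, where $\mu_{\min}$ is chosen so that it is stochastically dominated by both $\mu_r$ and $\mu_b$ — concretely one can take $\mu_{\min}$ to be the point mass at $1$ together with the smaller of the two probabilities of having $\geq 2$ children, or more simply any fixed offspring law with mean strictly bigger than $1$ that is dominated by both $\mu_r$ and $\mu_b$ (such a law exists because $\mu_r,\mu_b$ put mass only on strictly positive integers with mean $>1$). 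Using the standard thinning/coupling construction for BRW, realize $\widetilde D(n)$ and the two-type process on the same probability space so that every particle of $\widetilde D(n)$ corresponds to a particle of the two-type process at the same location: whenever a two-type particle has $k$ children placed at prescribed (i.i.d.\ $\nu$-distributed) sites, its $\widetilde D$-counterpart keeps a $\mu_{\min}$-distributed subset of those same children. This gives the pathwise inclusion $D(n)\supseteq \widetilde D(n)$, hence $\bar D(n)\supseteq \bar{\widetilde D}(n)$, for \emph{every} realization and \emph{irrespective of the pre-coloring}, since the pre-coloring never affects where offspring are placed (it only affects the \emph{color} assigned to a site, which is irrelevant to $\bar D$).

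Next I would invoke the one-type shape theorem \eqref{eq:shape} for the BRW with reproduction $(\mu_{\min},\nu)$: it satisfies the standing assumptions (offspring number strictly positive, mean $>1$; $K\supseteq$ neighbors of the origin), so there is a convex compact $\widetilde{\cA}$ with nonempty interior and $\0\in\operatorname{int}\widetilde{\cA}$, and almost surely $(1-\vep)\widetilde{\cA}\subset \bar{\widetilde D}(n)/n$ for all large $n$. Pick $c>0$ with $B(2c)\subset \operatorname{int}\widetilde{\cA}$; then almost surely $\bar{\widetilde D}(n)\supset n(1-\vep)\widetilde{\cA} \supset B(cn)$ for all $n$ large enough (taking $\vep<1/2$). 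Almost-sure convergence gives, for any $\delta>0$, an $N=N(\delta)$ with $\PP(\bar{\widetilde D}(n)\supset B(cn)\ \text{for all }n\geq N)\geq 1-\delta$; in particular $\PP(\bar D(n)\supset B(cn))\geq \PP(\bar{\widetilde D}(n)\supset B(cn))\geq 1-\delta$ for $n\geq N$. Since the coupled auxiliary process $\widetilde D(n)$ has a law that does not depend on the pre-coloring at all — it is just a fixed single-type BRW started from one particle at the origin — both $c$ and $N$ are manifestly independent of the pre-coloring, which is exactly what is claimed (with $\delta$ playing the role of $\vep$).

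The only genuinely delicate point is the coupling in the first paragraph: one must set up the graphical/tree representation of BRW carefully so that the two-type process and the auxiliary process are driven by the same randomness for offspring \emph{positions}, while the thinning that turns $\mu_r$- or $\mu_b$-many children into $\mu_{\min}$-many children is carried out consistently along the whole genealogical tree. This is routine once one recalls that a BRW can be built by first sampling, at each site ever occupied, an infinite i.i.d.\ $\nu$-sequence of potential child displacements, and then revealing only the first $\mu_\cdot$ of them; stochastic domination $\mu_{\min}\preceq\mu_r,\mu_b$ lets one monotonically couple the ``number revealed'' so that the revealed set for $\widetilde D$ is always a subset of that for the two-type process. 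I do not anticipate any obstruction beyond bookkeeping here. (The reason the general case $\nu_r\neq\nu_b$ resists this argument is precisely that then the spatial laws differ, the two processes can no longer be coupled site-by-site through a common displacement sequence, and one would need a genuine shape theorem in a non-i.i.d., self-generated environment — which is the open direction alluded to after the lemma.)
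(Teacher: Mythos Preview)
Your approach is correct and essentially identical to the paper's: both construct a single-type BRW with an offspring law stochastically dominated by $\mu_r$ and $\mu_b$ and the common displacement law $\nu$, couple it below the two-type process independently of the pre-coloring, and then invoke the one-type shape theorem. The paper's concrete choice for the dominated offspring law is the distribution of $\min(X_r,X_b)$ with $X_r\sim\mu_r$, $X_b\sim\mu_b$ independent, which makes the required properties (support on positive integers, mean $>1$, domination by both) immediate.
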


\begin{proof}
Let $\mu$ denote the probability measure arising from taking the minimum of a draw from $\mu_r$ and an independent draw from $\mu_b$, and note that $\mu$ inherits the properties that it puts positive mass only on strictly positive integers and has mean strictly larger than 1 from $\mu_r$ and $\mu_b$. Consider a one-type process with reproduction law $\cR=(\mu,\nu)$, where $\nu:=\nu_r=\nu_b$ denotes the common spatial law of the types, and let $D_{\sss\textrm{min}}(n)$ denote the set of visited sites at time $n$. It is straightforward to confirm that our two-type process can be coupled to this one-type process in such a way that $D(n)\supset D_{\sss\textrm{min}}(n)$, regardless of the pre-coloring. The one-type process fulfills the assumptions of the shape theorem from \cite{ComPop} and the claim hence follows from the lower bound in \eqref{eq:shape}.
\end{proof}

\noindent\textbf{Remark.} We remark that there are other assumptions on the reproduction laws that also guarantee that the two-type process can be bounded from below by a one-type process that fulfills the assumptions of the shape theorem. A particle could for instance always place a copy of itself at its birth location and, with a strictly positive probability, produce at least one additional offspring placed independently in the bounded (possibly different) sets $K_r$ and $K_b$, respectively, containing all neighbors of the origin. Different sets $K_r$ and $K_b$ can also be allowed under the assumption that the reproduction laws are ordered in that, for each site $z\in\Zd$, the number of particles placed at $z$ by $\nu_r$ is stochastically smaller than for $\nu_b$.
\hfill$\Box$

With Lemma \ref{GW} and \ref{le:perturbed_lin} at hand, we proceed to prove Theorem \ref{prop:p_large}.

\begin{proof}[Proof of Theorem \ref{prop:p_large}] We show that $\PP(G_r^c\cap G_b)>0$. To this end, first note that, with small but positive probability, the entire offspring of the initial red particle lands on the initially blue site $\1$ and gets recolored there, all (blue) particles that land on the origin before a fixed time $t>0$ avoid being recolored and stay blue, the blue particles visit all sites within distance $d$ from the origin up to time $t-1$ and at time $t$ a single particle lands on the origin and gets re-colored red. We may also assume that this scenario is achieved under the restriction that no particle produces more than $m$ children, where $m$ is an arbitrary fixed integer exceeding the smallest possible value for both $\mu_r$ and $\mu_b$. Let us refer to this event as $I=I(t,d,m)$, where $t$ and $d$ will be specified below.

Now consider a two-type process started from the single red particle at the origin at time $t$, where the origin is red and all other sites visited by the process up until then are blue. Write $E_N$ for the event that all sites in $B(cn)$ have been visited by this process for all $n\geq N$, where $c$ and $N$ are chosen such that $\PP(E_N)> 1/2$ for all initial colorings -- this is possible by Lemma \ref{le:perturbed_lin}. Then set $d=cN$ and choose $t$ accordingly big such that the initial scenario $I$ sketched above happens with positive probability. Finally, consider each red particle at the origin at time $t+n$ in our original two-type process as a seed of a new process as in Lemma \ref{GW} (where each offspring is removed with probability $p$), write $R_n$ for the event that at least one of them produces offspring that reach a site at distance $cn$ from the origin and let $R=\cup_{n=1}^\infty R_n$.

We claim that, on $I=I(t,cN,m)$, the event $E_N\cap R^c$ guarantees that no site except for the origin is colored red in the two-type process, that is,
\begin{equation}\label{eq:lb}
\PP(G_r^c\cap G_b)\geq \PP(E_N\cap R^c) \mbox{ on }I.
\end{equation}
To see this, first note that $R^c$ guarantees that no red particle located at the origin at time $s\leq t+N$ produces red offspring at an uncolored site, since $B(cN)$ is blue already at time $t$ on $I$ and all red particles placed at sites in $B(cN)$ are hence colored blue with probability $p$. Assume inductively that no red particle placed at the origin at any time $s\leq t+n$ gives rise to offspring at an uncolored site, where $n\geq N$. The event $E_N$ then guarantees that all sites in $B(cn)\setminus \{\0\}$ are blue at time $t+n$ in our two-type process. Red particles placed at sites in $B(cn)$ are hence recolored with probability $p$, and $R_n^c$ then guarantees that no red particle located at the origin at time $t+n+1$ gives rise to red offspring at uncolored sites.

By \eqref{eq:lb}, we are done if we show that $\PP(R^c)> 1/2$ on $I$ for $p$ sufficiently close to 1, since then $\PP(E_N)+\PP(R^c)>1$, implying that $\PP(E_N\cap R^c)>0$. To estimate $\PP(R^c)$, write $M_n$ for the number of red particles at the origin at time $t+n$ in the two-type process and note that, by Lemma \ref{GW}, we have that
\begin{equation}\label{eq:R_est}
\PP(R)\leq \sum_{n=1}^\infty \PP(R_n)\leq \E [M_n] \big((1-p)\E[\mu_r]\big)^{\frac{cn}{\rho_r}}.
\end{equation}
Let us crudely bound $\E[M_n]$ on $A$: Note that, on $I$, the number of particles at any given site at time $t$ is at most $2m^t$, since each particle produces at most $m$ children. Consider a one-type process that initially has $2m^t$ particles at every site and when branching gives rise to $X+Y$ particles, where $X\sim\mu_r,\ Y\sim\mu_b$ are independent, and the $X$ particles are displaced according to $\nu_r$, the $Y$ particles displaced according to $\nu_b$, that is, an independent superposition of red and blue branching with initially $2m^t$ particles per site. Clearly such a process dominates the two-type process starting from the configuration prescribed by $I$ in terms of particles per site at any given time. Since it is started from a translation invariant configuration, the distribution of this one-type process is translation invariant. The random measure defining the mass sent out from a given site in a given time step as the number of offspring produced by particles at the site is hence translation invariant and the mass-transport principle then implies that the expected mass received by the site equals the expected mass sent out; see e.g.\ \cite[p.43]{BLPS}. From this we deduce that $\E[M_n]\leq [2m^t(\E[\mu_r]+\E[\mu_b])]^n$ on $I$.

Now choosing $p$ sufficiently close to 1 will make $2m^t(\E[\mu_r]+\E[\mu_b])\cdot\big[(1-p)\cdot \E[\mu_r]\big]^{\frac{c}{\rho_r}}$ smaller than 1, ensuring that the sum in \eqref{eq:R_est} is convergent. By choosing $p$ even larger we can make the sum smaller than 1/2, completing the proof.
\end{proof}

\section{Independent BRW:s and $p$ close to 0}

We first prove Proposition \ref{prop:p0}, which essentially follows from the one-type shape theorem.

\begin{proof}[Proof of Proposition \ref{prop:p0}]
To show (a), fix $\vep\in(0,1)$ such that $(1-\vep)\tau_b^x>(1+\vep)\tau_r^x$, and define $t_b(n)=(1-\vep)n\tau_b^x$ and $t_r(n)=(1+\vep)n\tau_r^x$. For $i\in\{r,b\}$, let $D_i(n)$ denote the set of sites where type $i$ particles have been placed up to time $n$ and set $\bar{D}_i(n)=\{y+(1/2,1/2]:y\in D_i(n)\}$. By \eqref{eq:shape}, we have that $\bar{D}_r(n)\cap L_{t_b(n)}^x=\emptyset$ for $n>N_r$ where $N_r<\infty$ almost surely, that is, the red type does not reach further than $t_r(n)$ in direction $x$ for large $n$. However \eqref{eq:shape} also implies that $L^x\setminus L^x_{t_r(n)}\subset \bar{D}_b(n)$ for $n>N_b$, that is, blue has covered everything up to $t_b(n)$ in direction $x$ for large $n$. Recall that $S_b$ denotes the set of sites that are ultimately colored red. Since a site is colored by type $i$ if type $i$ is the first one to place offspring there, it follows that $L^x_t\subset \bar{S}_b$ for $t>t_b(N)$, where $N=\max\{N_r,N_b\}$.

Part (b) is an immediate consequence of (a). To prove (c), denote $\inf\{\tau_b^x-\tau_r^x:|x|=1\}=\lambda'$ and note that, since $\cA_r$ and $\cA_b$ are compact, we have that $\lambda'>0$. Furthermore, denote $\lambda=\lambda'/\sup \{|x|:x\in\cA_r\cup \cA_b\}$. By the definition of $\lambda$, we have that $(1+\lambda/3)n\cA_r\subset (1-\lambda/3)n\cA_b$. Furthermore, it follows from \eqref{eq:shape} that $\bar{D}_b(n)\supset (1-\lambda/3)n\cA_b$ for $n>N_b$, while $\bar{D}_r(n)\subset(1+\lambda/3)n\cA_r$ for $n>N_r$. Since a site is colored by the type that reaches it first, we conclude that red will not color any sites after time $N=\max\{N_r,N_b\}$.
\end{proof}

We finally treat the case where $p\gtrapprox 0$.

\begin{proof}[Proof of Theorem \ref{prop:p_small}] To begin with, note that $\{\langle x,y\rangle,y\in\Zd\}$ is a discrete subset of $\RR$, with values being at least $\min\{|x_i|;\;1\leq i\leq d, x_i\neq 0\}$ apart. Consequently, all displacements $y\in K_b\cap A_r(x)$ of blue offspring actually fulfills $\langle x,y\rangle\geq\rho_\mathrm{r}(x)+\epsilon$ for some $\epsilon>0$.

Now, let us consider a thinned version of the blue BRW, consisting only of offspring having a displacement in $A_r(x)$ and not getting recolored, and compare it with a branching process, in which the offspring distribution $\mu_b$ is thinned out by (independently) keeping every newborn particle with probability $(1-p)\nu_b\big(A_r(x)\big)$. Obviously, with respect to the number of particles, the latter is stochastically dominated by the former and by our assumptions it is supercritical, that is, it has a positive probability to produce an infinite progeny, as the expected number of offspring per particle amounts to
\[
\E[\mu_b]\nu_b\big(A_r(x)\big)(1-p)=n_b\big(A_r(x)\big)(1-p)>1.
\]
Since $\langle x,y\rangle\geq \rho_r(x)+\epsilon$ for any displacement $y\in A_r(x)$ of these particles, provided it does not die out, any such thinned blue BRW will outgrow both the initial BRW of red particles as well as any (red) progeny of recolored blue particles (which have displacement at most $\rho_r(x)$ per generation) in direction $x$. Its advantage in direction $x$ will therefore enable it to visit infinitely many sites before any red particle does and this conclusion actually holds true irrespectively of the site that the considered thinned blue BRW originates from and the time of its emergence: Let us say that it originates at time $t_0$ from site $z_0$. Then there will be sites $z$ visited by the progeny of this thinned BRW at time $t$ with $\langle x,z\rangle\geq(\rho_r(x)+\epsilon)\cdot(t-t_0)-\langle x,z_0\rangle$ for all $t\geq t_0$. For a red particle to reach such a site $z$ earlier, there needs to be a chain of sites $y_0,y_1, \dots, y_n$ (not necessarily all marked red) such that $n\leq t$, $y_0=\mathbf{0}$, $y_n=z$ and $\langle y_i-y_{i-1},x\rangle\leq\rho_r(x)$ for all $1\leq i\leq n$, hence $\langle x,z\rangle\leq t\cdot \rho_r(x)$. For large enough $t$ this leads to a contradiction.

In order to conclude, we simply have to verify that there will a.s.\ emerge such a thinned blue BRW that does not die out. Since site $\mathbf 1$ is marked blue and the total number of particles (joint red and blue) goes to infinity, which together with our assumptions on the displacements make the joint process recurrent, the number of blue particles ever seen at site $\mathbf 1$ grows to infinity a.s. Each such particle starts an infinite thinned blue BRW with positive probability, and the claim hence follows from the conditional Borel-Cantelli-Lemma.
\end{proof}

We end by confirming that having a supercritical advantage in some direction implies an advantage in terms of asymptotic speed in some (possibly different) direction.

\begin{proof}[Proof of Proposition \ref{le:supercritical}] Assume that blue has a supercritical advantage in some direction $x$ and consider the thinned blue BRW described in the proof of Proposition \ref{prop:p_small}, where only offspring placed in $A_r(x)$ is considered. This BRW survives with positive probability and, if it does, we have that $\mathcal{A}_b\cap A_r(x)\neq \emptyset$, where $\mathcal{A}_b$ is the asymptotic shape of a blue one-type process. Since the convergence in the shape theorem is with probability 1, we conclude that $\mathcal{A}_b\cap A_r(x)\neq\emptyset$ almost surely. On the other hand, we have by the definition of $A_r(x)$ that $\mathcal{A}_r\cap A_r(x)=\emptyset$. It follows that the blue shape must exceed the red shape in some direction $x'$.
\end{proof}

\end{document}